
\documentclass[11pt]{article}
\usepackage{amssymb,amsmath,latexsym, verbatim,hyperref,amsthm}
\usepackage{tikz}
\oddsidemargin 0in
\evensidemargin 0in
\topmargin -0.6in
\textwidth 6.5in
\textheight 8.5in

\allowdisplaybreaks

\begin{document}

\begin{doublespace}

\newtheorem{thm}{Theorem}[section]
\newtheorem{lemma}[thm]{Lemma}
\newtheorem{cond}[thm]{Condition}
\newtheorem{defn}[thm]{Definition}
\newtheorem{prop}[thm]{Proposition}
\newtheorem{corollary}[thm]{Corollary}
\newtheorem{remark}[thm]{Remark}
\newtheorem{example}[thm]{Example}
\newtheorem{conj}[thm]{Conjecture}
\numberwithin{equation}{section}
\def\ee{\varepsilon}
\def\qed{{\hfill $\Box$ \bigskip}}
\def\NN{{\cal N}}
\def\AA{{\cal A}}
\def\MM{{\cal M}}
\def\BB{{\cal B}}
\def\CC{{\cal C}}
\def\LL{{\cal L}}
\def\DD{{\cal D}}
\def\FF{{\cal F}}
\def\EE{{\cal E}}
\def\QQ{{\cal Q}}
\def\RR{{\mathbb R}}
\def\R{{\mathbb R}}
\def\L{{\bf L}}
\def\K{{\bf K}}
\def\S{{\bf S}}
\def\A{{\bf A}}
\def\E{{\mathbb E}}
\def\F{{\bf F}}
\def\P{{\mathbb P}}
\def\N{{\mathbb N}}
\def\eps{\varepsilon}
\def\wh{\widehat}
\def\wt{\widetilde}
\def\pf{\noindent{\bf Proof.} }
\def\beq{\begin{equation}}
\def\eeq{\end{equation}}
\def\lam{\lambda}
\def\H{\mathcal{H}}
\def\nn{\nonumber}
\def\C{\mathbb{C}}

\newcommand\blfootnote[1]{%
  \begingroup
  \renewcommand\thefootnote{}\footnote{#1}%
  \addtocounter{footnote}{-1}%
  \endgroup
}

\title{\Large \bf  Large-time and small-time behaviors of the spectral heat content for time-changed stable processes}
\author{Kei Kobayashi and Hyunchul Park}

\date{\today }
\maketitle

\begin{abstract}
We study the large-time and small-time asymptotic behaviors of the spectral heat content for time-changed stable processes, where the time change belongs to a large class of inverse subordinators. 
For the large-time behavior, the spectral heat content decays polynomially with the decay rate determined by the Laplace exponent of the underlying subordinator, which is in sharp contrast to the exponential decay observed in the case when the time change is a subordinator. On the other hand, the small-time behavior exhibits three different decay regimes, where the decay rate is determined by both the Laplace exponent and the index of the stable process.
\end{abstract}

\section{Introduction}\label{section:Introduction}
The spectral heat content measures the total heat that remains on a domain $\Omega\subset \R^d$ whose initial temperature is one with Dirichlet boundary condition. 
The spectral heat content for Brownian motions has been studied intensively in the past few decades. Recently, there have been growing interests in studying the spectral heat content for jump processes, which is defined by replacing the Brownian motions with other jump processes and putting the zero exterior condition outside $\Omega$ in order to take into account the fact that jump processes could exit the domain by jumping into $\R^{d}\setminus \overline{\Omega}$. In \cite{Val2016, Val2017, GPS19, KP, P20, PS19, PS22}, the spectral heat content for stable processes and other L\'evy processes are studied for both subordinate killed processes and killed subordinate processes. 
In particular, in \cite{PS22}, the two-term asymptotic expansion for the spectral heat content for isotropic stable processes on bounded $C^{1,1}$ open sets was investigated. 

The main purpose of this paper is to study the large-time and small-time asymptotic behaviors of the spectral heat content for time-changed stable processes of the form $Y\circ E=\{Y_{E_t}\}_{t\ge0}$, where the outer process $Y=\{Y_t\}_{t\ge0}$ is an isotropic stable process and the inner process (or the time change) $E=\{E_t\}_{t\ge 0}$ belongs to a large class of \textit{inverse subordinators}. 
In general, the large-time and small-time asymptotics for the spectral heat content provide both spectral information of the underlying process and geometric information of the domain. Indeed, for Brownian motions, the large-time decay rate of the spectral heat content is determined by the principal eigenvalue of the infinitesimal generator of the Brownian motions. 
On the other hand, as for the small-time behavior, the spectral heat content for Brownian motions on a smooth domain admits an asymptotic expansion, with the coefficients of the expansion containing geometric characteristics of the domain $\Omega$, such as the area, perimeter and mean curvature.

The main results of this paper are Theorems \ref{thm:SHC large time} and \ref{thm:SHC small time} below, which provide the large-time and small-time asymptotic behaviors of the spectral heat content $Q_{\Omega}^{Y\circ E}(t)$ for time-changed isotropic stable processes of the form $Y\circ E$.  
Note that throughout the paper, the symbol $f(t)\sim g(t)$ means $f(t)/g(t)\to 1$ as $t\to \infty$ or $t\downarrow 0$, depending on which asymptotic behavior is being considered. 

\begin{thm}\label{thm:SHC large time}
Let $Y$ be an isotropic stable process and $\{(\lam_{n}^{Y}, \psi_{n}^{Y})\}_{n=1}^\infty$ be the eigenpairs of the infinitesimal generator for the associated killed process. Let $E$ be the inverse of a subordinator whose Laplace exponent $\phi$ is regularly varying at $0^+$ with index $\beta\in[0,1)$. Suppose $Y$ and $E$ are independent. 
Let $\Omega\subset \R^{d}$ be a bounded open set.
Then 
\[
Q_{\Omega}^{Y\circ E}(t)
\sim \phi(1/t)\sum_{n=1}^{\infty}\frac{\left(\int_{\Omega}\psi^{Y}_{n}(x)dx\right)^{2}}{\lam^{Y}_{n}\Gamma(1-\beta)} \ \textrm{as} \ t\to\infty.
\]
\end{thm}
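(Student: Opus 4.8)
The plan is to reduce the time-changed problem to the underlying killed stable process via a subordination identity and then extract the large-time rate from an explicit Laplace transform through a Tauberian theorem. First I would record the structural identity. Writing $\tau_\Omega^Y$ and $\tau_\Omega^{Y\circ E}$ for the first exit times of $Y$ and $Y\circ E$ from $\Omega$, the continuity and monotonicity of the inverse subordinator $E$ give $\{\tau_\Omega^{Y\circ E}>t\}=\{\tau_\Omega^Y>E_t\}$, so conditioning on $E_t$ and using the independence of $Y$ and $E$ yields
\[
Q_\Omega^{Y\circ E}(t)=\int_\Omega \P_x(\tau_\Omega^Y>E_t)\,dx=\E\!\left[Q_\Omega^Y(E_t)\right].
\]
Combining this with the spectral representation $Q_\Omega^Y(u)=\sum_{n=1}^\infty a_n e^{-\lam_n^Y u}$ of the killed heat content, where $a_n:=\left(\int_\Omega\psi_n^Y(x)\,dx\right)^2\ge 0$, and interchanging sum and expectation by Tonelli gives the working formula
\[
Q_\Omega^{Y\circ E}(t)=\sum_{n=1}^\infty a_n\,\E\!\left[e^{-\lam_n^Y E_t}\right].
\]

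Next I would analyze $\E[e^{-\lam E_t}]$ for fixed $\lam>0$. Using $\E[e^{-sD_u}]=e^{-u\phi(s)}$ for the parent subordinator $D$ and the first-passage relation between $E$ and $D$, a direct computation gives the double Laplace transform
\[
\int_0^\infty e^{-st}\,\E\!\left[e^{-\lam E_t}\right]dt=\frac{\phi(s)}{s(\lam+\phi(s))}.
\]
Since $\phi(s)\to0$ as $s\to0^+$, the right-hand side is asymptotic to $\lam^{-1}\phi(s)/s=\lam^{-1}s^{-(1-\beta)}L(1/s)$ for the slowly varying $L$ with $\phi(s)=s^\beta L(1/s)$, i.e.\ it is regularly varying at $0^+$ with index $\beta-1$. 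As $t\mapsto\E[e^{-\lam E_t}]$ is nonincreasing, Karamata's Tauberian theorem with exponent $\rho=1-\beta\in(0,1]$, together with the monotone density theorem, then yields
\[
\E\!\left[e^{-\lam E_t}\right]\sim\frac{\phi(1/t)}{\lam\,\Gamma(1-\beta)}\qquad\text{as }t\to\infty,
\]
which is exactly the $n$-th term of the claimed limit after dividing by $\phi(1/t)$ and weighting by $a_n$.

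The main obstacle is interchanging the limit $t\to\infty$ with the infinite sum, since termwise convergence alone is insufficient. I would establish a uniform bound $\E[e^{-\lam E_t}]\le C\,\phi(1/t)/\lam$ valid for all $\lam>0$ and $t>0$. The estimate $\P(E_t\le a)\le(1-e^{-1})^{-1}a\,\phi(1/t)$ follows from $\P(E_t\le a)\le\P(D_a\ge t)\le(1-e^{-1})^{-1}\E[1-e^{-D_a/t}]=(1-e^{-1})^{-1}(1-e^{-a\phi(1/t)})$ and $1-e^{-x}\le x$; a dyadic decomposition of the event $\{E_t>1/\lam\}$ (on the $k$-th ring $e^{-\lam E_t}\le e^{-2^k}$ while $\P(E_t\le2^{k+1}/\lam)\le C\,2^{k+1}\lam^{-1}\phi(1/t)$) removes the logarithmic loss that a crude split would produce and gives the bound with a summable constant $\sum_k 2^{k+1}e^{-2^k}<\infty$. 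Since $\sum_n a_n/\lam_n^Y=\int_0^\infty Q_\Omega^Y(u)\,du=\int_\Omega \E_x[\tau_\Omega^Y]\,dx<\infty$ for bounded $\Omega$, the dominating series $\sum_n C a_n/\lam_n^Y$ converges, and dominated convergence for series justifies
\[
\lim_{t\to\infty}\frac{Q_\Omega^{Y\circ E}(t)}{\phi(1/t)}=\sum_{n=1}^\infty a_n\lim_{t\to\infty}\frac{\E[e^{-\lam_n^Y E_t}]}{\phi(1/t)}=\sum_{n=1}^\infty\frac{a_n}{\lam_n^Y\,\Gamma(1-\beta)},
\]
which is the assertion. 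I expect the verification of this uniform bound, together with a careful treatment of the endpoint $\beta=0$ (where $\phi$ is slowly varying), to be the most delicate part; everything else is standard once the subordination identity is in place.
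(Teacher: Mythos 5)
Your proposal is correct and follows essentially the same route as the paper: the spectral representation of $Q_\Omega^{Y\circ E}(t)$, the double Laplace transform $\mathcal{L}_t\bigl[\E[e^{-aE_t}]\bigr](s)=\frac{\phi(s)}{s(\phi(s)+a)}$, Karamata's Tauberian theorem combined with the monotone density theorem for the termwise asymptotics $\E[e^{-\lam_n^Y E_t}]\sim \phi(1/t)/(\lam_n^Y\Gamma(1-\beta))$, and dominated convergence for the interchange of limit and sum. The one place you diverge is the domination step: your uniform bound $\E[e^{-\lam E_t}]\le C\,\phi(1/t)/\lam$ via the dyadic decomposition is valid but more elaborate than needed, since the paper simply uses the monotonicity $\E[e^{-\lam_n^Y E_t}]\le \E[e^{-\lam_1^Y E_t}]$ together with the already-established $n=1$ asymptotics and the summability $\sum_n\bigl(\int_\Omega\psi_n^Y(x)\,dx\bigr)^2=|\Omega|<\infty$.
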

\begin{thm}\label{thm:SHC small time}
Let $Y$ be an isotropic stable process of index $\alpha\in (0,2)$. Let $E$ be the inverse of a subordinator whose Laplace exponent $\phi$ is regularly varying at $\infty$ with index $\beta\in(0,1)$. Suppose $Y$ and $E$ are independent. 
Let $\Omega\subset \R^{d}$ be a bounded $C^{1,1}$ open set if $d\geq 2$ or a bounded open interval if $d=1$.
Then as $t\downarrow 0$,
\begin{align*}
|\Omega|-Q^{Y\circ E}_{\Omega}(t)\sim&
\begin{cases}
\dfrac{|\partial \Omega|\E[\overline{Z}_{1}^{(\alpha)}]\Gamma(1+1/\alpha)}{\Gamma(1+\beta/\alpha)}[\phi(1/t)]^{-1/\alpha} &\text{if } \alpha\in (1,2),\vspace{1mm}\\
\dfrac{|\partial \Omega|}{\pi \Gamma(1+\beta)}[\phi(1/t)]^{-1}\ln\phi(1/t)&\text{if }\alpha=1,\vspace{1mm}\\
\dfrac{\mathrm{Per}_{\alpha}(\Omega)}{\Gamma(1+\beta)}[\phi(1/t)]^{-1} &\text{if } 
\alpha\in (0,1).
\end{cases}
\end{align*}
\end{thm}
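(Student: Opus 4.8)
The plan is to reduce the time‑changed problem to the stable problem by a subordination identity, and then to transport the known small‑time asymptotics of the stable spectral heat content through the random time change by a combined Abelian--Tauberian argument in the Laplace domain. First I would record the structural identity
\[
Q_{\Omega}^{Y\circ E}(t) = \E\bigl[Q_{\Omega}^{Y}(E_t)\bigr],
\qquad
|\Omega| - Q_{\Omega}^{Y\circ E}(t) = \E\bigl[g(E_t)\bigr],
\]
where $g(s):=|\Omega|-Q_{\Omega}^{Y}(s)$. This follows because $E$ is continuous and nondecreasing, so $Y\circ E$ leaves $\Omega$ precisely when $E$ reaches the exit time $\tau_{\Omega}^{Y}$ of $Y$; since $Y$ is purely discontinuous it exits by a jump, giving $\{\tau_{\Omega}^{Y\circ E}>t\}=\{E_t<\tau_{\Omega}^{Y}\}$ up to a null set, and conditioning on $E_t$ using the independence of $Y$ and $E$ yields the identity. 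I would then invoke the known small‑time asymptotics of the stable spectral heat content (established in the works cited in the Introduction): as $s\downarrow 0$,
\[
g(s)\sim
\begin{cases}
|\partial\Omega|\,\E[\overline{Z}_1^{(\alpha)}]\,s^{1/\alpha}, & \alpha\in(1,2),\\
\dfrac{|\partial\Omega|}{\pi}\,s\ln(1/s), & \alpha=1,\\
\mathrm{Per}_\alpha(\Omega)\,s, & \alpha\in(0,1).
\end{cases}
\]
Write $c$ for the leading constant and, in the two power cases, $p=1/\alpha$ or $p=1$ for the exponent.

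The heart of the argument is the Laplace‑transform identity for functionals of an inverse subordinator,
\[
\int_0^\infty e^{-\lam t}\,\E[f(E_t)]\,dt = \frac{\phi(\lam)}{\lam}\,\wt f\bigl(\phi(\lam)\bigr),
\qquad \wt f(u):=\int_0^\infty e^{-us}f(s)\,ds,
\]
which follows from the relation $\{E_t\le s\}=\{D_s>t\}$ between $E$ and its parent subordinator $D$ (with $\E[e^{-uD_s}]=e^{-s\phi(u)}$) and is checked first on exponentials $f(s)=e^{-us}$. Applying it with $f=g$ turns the sought small‑time behavior of $|\Omega|-Q_{\Omega}^{Y\circ E}(t)$ into the large‑$\lam$ behavior of $\tfrac{\phi(\lam)}{\lam}\wt g(\phi(\lam))$.

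For the Abelian input I would show $\wt g(u)\sim c\,\Gamma(p+1)\,u^{-(p+1)}$ in the power cases and $\wt g(u)\sim c\,u^{-2}\ln u$ for $\alpha=1$ as $u\to\infty$: since $0\le g\le|\Omega|$ and $g$ is nondecreasing, the contribution to $\wt g(u)$ from $\{s>\delta\}$ is $O(e^{-u\delta}/u)$, so only the small‑$s$ regime survives and the asymptotics of $g$ near $0$ may be integrated against $e^{-us}$. Substituting $u=\phi(\lam)$ and using that $\phi$ is regularly varying at $\infty$ with index $\beta$ (so $\phi(\lam)=\lam^{\beta}L(\lam)$ with $L$ slowly varying and $\ln\phi(\lam)\sim\beta\ln\lam$) gives
\begin{align*}
\frac{\phi(\lam)}{\lam}\,\wt g\bigl(\phi(\lam)\bigr) &\sim \frac{c\,\Gamma(p+1)}{\lam\,\phi(\lam)^{p}} \quad (\alpha\neq1),\\
\frac{\phi(\lam)}{\lam}\,\wt g\bigl(\phi(\lam)\bigr) &\sim \frac{c\,\ln\phi(\lam)}{\lam\,\phi(\lam)} \quad (\alpha=1),
\end{align*}
each a regularly varying function of $\lam$ at $\infty$.

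Finally, since $t\mapsto Q_{\Omega}^{Y\circ E}(t)$ is nonincreasing, $t\mapsto|\Omega|-Q_{\Omega}^{Y\circ E}(t)$ is nondecreasing, so Karamata's Tauberian theorem applies and converts the large‑$\lam$ behavior above into the claimed small‑$t$ behavior. In the power cases this produces the factor $\Gamma(p+1)/\Gamma(1+\beta p)$ with $p=1/\alpha$ or $p=1$, matching the $\alpha\in(1,2)$ and $\alpha\in(0,1)$ formulas after using $[\phi(1/t)]^{-p}=t^{\beta p}L(1/t)^{-p}$. I expect the main obstacle to be the boundary case $\alpha=1$: here the slowly varying part of the Laplace transform carries the extra factor $\ln\phi(\lam)\sim\beta\ln\lam$, and the Tauberian theorem naturally returns $\ln(1/t)$; the delicate point is to recognize, via $\beta\ln(1/t)\sim\ln\phi(1/t)$, that the correct normalization is $[\phi(1/t)]^{-1}\ln\phi(1/t)$ rather than $[\phi(1/t)]^{-1}\ln(1/t)$, and to verify that the slowly varying corrections do not disturb the constant $|\partial\Omega|/(\pi\Gamma(1+\beta))$. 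A secondary technical point worth isolating as a lemma is the rigorous Abelian transfer $\wt g(u)\sim c\,\wt h(u)$, where boundedness and monotonicity of $g$ are exactly what is used.
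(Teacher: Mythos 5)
Your proposal is correct, but it takes a genuinely different route from the paper. The paper first establishes the asymptotics of $\E[f_{\alpha}(E_t)]$ for the three model functions $f_\alpha$ and then transfers them to $|\Omega|-Q^{Y\circ E}_{\Omega}(t)$ via a sandwich argument (splitting on $\{E_t\le\delta\}$ and $\{E_t>\delta\}$ and using the superexponential decay of $\P(E_t>\delta)$, following \cite[Proposition 2.2]{KP}); the sticking point there is $\alpha=1$, where $\E[E_t\ln(1/E_t)]$ cannot be handled by Karamata plus the monotone density theorem because $x\mapsto x\ln(1/x)$ is not monotone, and the paper introduces a monotonized surrogate $V$ and separately bounds the error. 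You instead apply the double Laplace transform identity $\LL_t[\E[g(E_t)]](\lam)=\frac{\phi(\lam)}{\lam}\,\wt g(\phi(\lam))$ directly to $g(s)=|\Omega|-Q^{Y}_{\Omega}(s)$, which is bounded by $|\Omega|$ and nondecreasing, so the Abelian step (only the small-$s$ regime of $\wt g$ survives) and the Tauberian/monotone-density step (applied to the nondecreasing function $t\mapsto|\Omega|-Q^{Y\circ E}_{\Omega}(t)$) go through uniformly in $\alpha$, with the $\ln\phi(\lam)$ factor emerging automatically in the $\alpha=1$ computation exactly as in the paper's Lemma \ref{lemma:F}. This buys you a single unified argument that dispenses with both the monotonization device and the separate tail estimates \eqref{eqn:small1}--\eqref{eqn:small2}, and it is in the same spirit as the paper's own large-time proof; the paper's route, by contrast, isolates the reusable intermediate asymptotics of $\E[E_t^p]$ and $\E[E_t\ln(1/E_t)]$, which are of independent interest. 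Two small points of rigor you should supply: the extension of the Laplace identity from exponentials to bounded measurable $f$ (e.g.\ via the explicit potential density $\frac{\phi(\lam)}{\lam}e^{-s\phi(\lam)}ds$ obtained from \eqref{eqn:aux1}), and the explicit invocation of the monotone density theorem after Karamata, since Karamata alone only gives the asymptotics of $\int_0^t(|\Omega|-Q^{Y\circ E}_{\Omega}(u))\,du$.
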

\noindent
Detailed information about the notations used in the theorems, including $\overline{Z}_{1}^{(\alpha)}$ and $\mathrm{Per}_{\alpha}(\Omega)$, appear in the preliminary section, Section \ref{section:Preliminaries}.

Theorem \ref{thm:SHC large time} is established in Section \ref{subsection:Large Time Behavior}.
To the authors' knowledge, this is the very first paper in the literature that establishes the \textit{large-time} behavior of the spectral heat content for time-changed processes with the time changes being \textit{inverse} subordinators. Moreover, the result turns out to be very different from the case when the time changes are subordinators themselves. Indeed, as Proposition \ref{prop:SHC S} shows, if the time-change is given by a subordinator, then the spectral heat content exhibits an exponential decay and the decay rate is determined by $\phi(\lam_{1}^{Y})$, where $\phi$ is the Laplace exponent of the subordinator and $\lam^{Y}_{1}$ is the principal eigenvalue of the infinitesimal generator for the associated killed process. 
In contrast, in the case of an \textit{inverse} subordinator, Theorem \ref{thm:SHC large time} indicates that the spectral heat content exhibits a polynomial decay and the information about all the eigenpairs $\{(\lam_{n}^{Y}, \psi_{n}^{Y})\}_{n=1}^{\infty}$ appear in the limit.
In fact, the large-time behavior of the spectral heat content can be derived in a much more general setting; one can replace the isotropic stable process $Y$ with any L\'evy process
 whose associated killed process has transition density with representation of the form \eqref{eqn:HK} (see Remark \ref{remark:large}, item 4) for details).

Section \ref{subsection:Small Time Behavior} is devoted to the derivation of Theorem \ref{thm:SHC small time}.   
We assume that the domain $\Omega$ is a bounded $C^{1,1}$ open set if $d\geq 2$ or a bounded open interval if $d=1$ so that we can use a recent result \cite[Theorem 1.1]{PS22} or \cite[Theorem 1.1]{Val2016} on the spectral heat content for isotropic stable processes. 

The short-time behavior of the spectral heat content for time-changed Brownian motions, where the time change is given by an inverse subordinator (as opposed to a subordinator), was first studied in \cite{KP}. This current paper can be regarded as a natural continuation of the investigation carried out in \cite{KP} since Brownian motions are stable processes with  index $\alpha=2$ (and they are the only stable processes with continuous sample paths). However, let us stress again that Theorem \ref{thm:SHC large time} on the large-time behavior is completely new, and our approach for proving the theorem is significantly different from those in \cite{KP}; in particular, we resort to the double Laplace transform of the inverse subordinator $E$ (see Lemma \ref{lemma:LT}). Moreover, even for the small-time behavior, replacing Brownian motions with stable processes has generated a new difficulty that was not present in \cite{KP}.
Indeed, when the stability index is $\alpha=1$, even though the exact asymptotic behavior of the function $t\mapsto \E[E_{t}\ln(1/E_{t})]$ is needed, a standard argument based on the Tauberian theorem together with the monotone density theorem is not directly applicable since the function $x\mapsto x\ln(1/x)$ is not monotone.
We will overcome this difficulty by introducing a \textit{monotonized} function $V(x)$ of $x\ln(1/x)$ (see \eqref{eqn:F} for the definition) and showing that the error induced by the introduction of $V(x)$ is negligible in Proposition \ref{prop:asym log}.

Before closing this section, let us briefly explain why it is worth investigating processes involving inverse subordinators. First, time-changed Brownian motions with time changes being inverse subordinators naturally appear in the context of subdiffusions, where the particles diffuse at a slower pace than the usual Brownian particles. Moreover, over the past few decades, the time-changed Brownian motions and their various generalizations, including time-changed L\'evy processes, time-changed fractional Brownian motions, and solutions of stochastic differential equations driven by such time-changed processes, have been widely studied due to a number of practical applications arising in physics, biology, hydrology, finance, etc.\ (see e.g.\ \cite{MeerschaertSikorskii,HKU-book} and references therein). One of the focuses of these investigations is to identify the governing equations for the time-changed processes. In particular, it is known that, when the outer process $Y$ is a L\'evy process and the independent time change $E$ is an inverse \textit{stable} subordinator with index $\beta\in(0,1)$, then the governing equation for the time-changed L\'evy process $Y\circ E$ is the partial differential equation $\partial^\beta_t u(t,x)=A_x u(t,x)$, with $A_x$ being the infinitesimal generator of $Y$ acting on $x$ and $\partial^\beta_t$ denoting the Caputo fractional derivative of order $\beta$ acting on $t$. 
Theorems \ref{thm:SHC large time} and \ref{thm:SHC small time} to be established in this paper are valuable since they provide more information about the nature of such time-changed L\'evy processes.

\section{Preliminaries}\label{section:Preliminaries}
Let $Y=\{Y_{t}\}_{t\geq 0}$ be an \textit{isotropic stable process} of stability index $\alpha\in(0,2]$ with c\`adl\`ag paths, the characteristic function of which is given by $\E[e^{i\xi Y_{t}}]=e^{-t|\xi|^{\alpha}}$, $\xi\in\R^d$.
When $\alpha=2$, $Y$ is a Brownian motion whose sample paths are continuous, whereas for $\alpha\in (0,2)$, $Y$ is a pure-jump process. 

Let $\Omega$ be a bounded open set in $\R^{d}$. 
The \textit{spectral heat content} $Q_{\Omega}^{Y}(t)$ for the stable process $Y$ on $\Omega$ at time $t$ is defined by 
\begin{align}\label{def:spectralheatcontent}
Q_{\Omega}^{Y}(t)=\int_{\Omega}\P_{x}(\tau_{\Omega}^{Y}>t)dx, 
\end{align}
where $\tau^{Y}_{\Omega}=\inf\{t>0 : Y_{t}\notin \Omega\}$ is the first exit time of $Y$ from $\Omega$.
Let $p^{Y}(t,x,y)$ be the transition density for $Y$. By Fourier inversion, 
$
p^{Y}(t,x,y)=p^{Y}(t,y-x)=(2\pi)^{-d}\int_{\R^{d}}e^{-i\langle \xi, y-x\rangle}e^{-t|\xi|^{\alpha}}d\xi.
$
In particular, $p^{Y}(t,x,y)\leq (2\pi)^{-d}\int_{\R^{d}}e^{-t|\xi|^{\alpha}}d\xi=\frac{\omega_{d}}{\alpha}\Gamma(d/\alpha)t^{-d/\alpha}$, where $\omega_{d}=\frac{2\pi^{d/2}}{\Gamma(d/2)}$.
Let $Y^{\Omega}=\{Y^{\Omega}_{t}\}_{t\geq 0}$ be the killed process defined by $Y^{\Omega}_{t}=Y_{t}$ if $t<\tau^{Y}_{\Omega}$ and $Y^{\Omega}_{t}=\partial$ if $t\geq \tau^{Y}_{\Omega}$,
where $\partial$ is a cemetery state.
The transition density $p^{Y,\Omega}(t,x,y)$ for $Y^{\Omega}$ is given by 
$
	p^{Y,\Omega}(t,x,y)=p^{Y}(t,x,y)-r^{Y}_{\Omega}(t,x,y),
$
 where $r^{Y}_{\Omega}(t,x,y)=\E[p^{Y}(t-\tau^{Y}_{\Omega}, Y_{\tau_{\Omega}^{Y}},y); \tau^{Y}_{\Omega} \leq t]$. 
In particular, $p^{Y,\Omega}(t,x,y)\leq p^{Y}(t,x,y)\leq \frac{\omega_{d}}{\alpha}\Gamma(d/\alpha)t^{-d/\alpha}.$ 
 Hence, the semigroup defined by $T_{t}^{Y,\Omega}f(x):=\E_{x}[f(Y_{t}^{\Omega})]$ for $f\in L^{2}(\Omega)$ is a Hilbert--Schmidt operator, and there exist pairs $\{(\lam_{n}^{Y}, \psi_{n}^{Y})\}_{n=1}^\infty$ of eigenvalues and eigenfunctions (or eigenpairs in short) such that 
 \begin{align}\label{eqn:eigenvalues}
	0< \lam_1^Y < \lam_2^Y \le \cdots \le \lam_n^Y \to \infty
 \end{align}
 and
\beq\label{eqn:HK}
p^{Y,\Omega}(t,x,y)=\sum_{n=1}^{\infty}e^{-\lam^{Y}_{n}t}\psi^{Y}_{n}(x)\psi^{Y}_{n}(y).
\eeq
Due to the identity $\P_{x}(\tau^{Y}_{\Omega}>t)=\int_{\Omega}p^{Y,\Omega}(t,x,y)dy$, the spectral heat content for $Y$ has the alternative representation
\beq\label{eqn:SH2}
Q^{Y}_{\Omega}(t)
=\sum_{n=1}^{\infty}e^{-\lam^{Y}_{n}t}\left(\int_{\Omega}\psi^{Y}_{n}(x)dx\right)^{2}.
\eeq

Now we state a recent result from \cite{Val2016, PS22} about the two-term small-time asymptotic behavior of $Q_{\Omega}^{Y}(t)$, where the open set $\Omega\subset \R^d$ when $d\ge 2$ is assumed to be $C^{1,1}$; i.e., its boundary can be locally represented as the graph of a $C^{1}$ function whose gradient is Lipschitz.
Define
\begin{align}\label{def:f_alpha}
f_{\alpha}(t)=
\begin{cases}
t^{1/\alpha} &\text{if } \alpha\in (1,2),\\
t\ln(1/t) &\text{if } \alpha=1,\\
t &\text{if }\alpha\in(0,1),
\end{cases}
\ \ \ \textrm{and} \ \ \ 
c_{\alpha}=
\begin{cases}
\E[\overline{Z}_{1}^{(\alpha)}]|\partial \Omega| &\text{if } \alpha\in (1,2),\\
\dfrac{|\partial \Omega|}{\pi} &\text{if } \alpha=1,\\
\mathrm{Per}_{\alpha}(\Omega)&\text{if }\alpha\in(0,1).
\end{cases}
\end{align}
Here, $|\partial \Omega|$ is the perimeter of $\Omega$ if $d\geq 2$ or $|\partial \Omega|$=2 if $d=1$ (in which case, $\Omega$ is a bounded open interval),
$\overline{Z}_{t}^{(\alpha)}=\sup_{s\le t}Z^{(\alpha)}_s$ stands for the running supremum of a one-dimensional symmetric $\alpha$-stable process $\{Z^{(\alpha)}_t\}_{t\ge 0}$, and $\text{Per}_{\alpha}(\Omega)=\int_{\Omega}\int_{\Omega^{c}}\frac{c(d,\alpha)}{|x-y|^{d+\alpha}}dydx$ is the $\alpha$-fractional perimeter of $\Omega$. 
\begin{thm}[{\cite[Theorem 1.1]{Val2016}} and {\cite[Theorem 1.1]{PS22}}]\label{thm:main}
Let $\Omega\subset \R^d$ be a bounded $C^{1,1}$ open set if $d\geq 2$ or a bounded open interval if $d=1$.
Let $Y$ be an isotropic stable process of index $\alpha\in(0,2)$. 
Let $f_{\alpha}(t)$ and $c_{\alpha}$ be defined as in \eqref{def:f_alpha}. Then 
\[
	|\Omega|-Q^{Y}_{\Omega}(t)\sim c_{\alpha}f_{\alpha}(t) \ \textrm{as} \ t\downarrow 0.
\]
\end{thm}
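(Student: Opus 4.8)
The plan is to start from the probabilistic representation $|\Omega|-Q^{Y}_{\Omega}(t)=\int_{\Omega}\P_{x}(\tau^{Y}_{\Omega}\le t)\,dx$ and to exploit the fact that, for small $t$, the exit probability $\P_{x}(\tau^{Y}_{\Omega}\le t)$ is negligible unless $x$ lies within a thin layer near $\partial\Omega$. The $C^{1,1}$ assumption (or the interval assumption when $d=1$) guarantees that in a tubular neighborhood of the boundary the distance function $\delta(x)=\mathrm{dist}(x,\partial\Omega)$ is Lipschitz with bounded curvature, so that I can introduce boundary-normal coordinates $x\leftrightarrow(\sigma,a)$ with $\sigma\in\partial\Omega$ and $a=\delta(x)$, reducing the spatial integral to a surface integral over $\partial\Omega$ against a one-dimensional profile in the normal variable $a$. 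The three regimes in the statement then arise from three genuinely different exit mechanisms, which I would treat separately.

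For $\alpha\in(1,2)$ I would compare $\Omega$ locally with a half-space. Projecting the isotropic process onto the inward normal direction yields a one-dimensional symmetric $\alpha$-stable process, and exit from the half-space started at distance $a$ from the boundary occurs exactly when its running supremum first exceeds $a$; hence the half-space exit probability equals $\P(\overline{Z}^{(\alpha)}_{t}>a)$. Integrating in the normal direction and using $\int_{0}^{\infty}\P(\overline{Z}^{(\alpha)}_{t}>a)\,da=\E[\overline{Z}^{(\alpha)}_{t}]=t^{1/\alpha}\E[\overline{Z}^{(\alpha)}_{1}]$ (finite precisely because $\alpha>1$), then multiplying by $|\partial\Omega|$, produces the claimed leading term $\E[\overline{Z}^{(\alpha)}_{1}]\,|\partial\Omega|\,t^{1/\alpha}$. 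The case $\alpha=1$ follows the same half-space reduction, but now $\E[\overline{Z}^{(1)}_{1}]=\infty$ and the normal integral must be cut off at the macroscopic scale $L$ of $\Omega$; using the tail $\P(\overline{Z}^{(1)}_{1}>u)\sim(\pi u)^{-1}$ as $u\to\infty$ (the running-supremum tail being governed by a single large jump, with Cauchy L\'evy density $\tfrac{1}{\pi}|z|^{-2}$) together with the scaling $\overline{Z}^{(1)}_{t}\stackrel{d}{=}t\,\overline{Z}^{(1)}_{1}$ gives $\int_{0}^{L}\P(\overline{Z}^{(1)}_{t}>a)\,da\sim\frac{t}{\pi}\ln(1/t)$, and multiplying by $|\partial\Omega|$ yields the logarithmically corrected rate.

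For $\alpha\in(0,1)$ the dominant mechanism is a single large jump out of $\Omega$ rather than a boundary overshoot, so I would avoid the distance-function reduction and instead use the small-time off-diagonal heat-kernel asymptotics $p^{Y}(t,x,y)\sim t\,c(d,\alpha)|x-y|^{-d-\alpha}$ for $x\ne y$, via the L\'evy system. Writing $|\Omega|-\int_{\Omega}\P_{x}(Y_{t}\in\Omega)\,dx=\int_{\Omega}\int_{\Omega^{c}}p^{Y}(t,x,y)\,dy\,dx$ and passing to the limit under the integral gives the leading term $t\,\mathrm{Per}_{\alpha}(\Omega)$, which is finite precisely because $\alpha<1$. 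It then remains to show that replacing $\P_{x}(Y_{t}\in\Omega)$ by $\P_{x}(\tau^{Y}_{\Omega}>t)$ costs only $o(t)$; this is the contribution of paths that leave $\Omega$ and return by time $t$, which requires at least two jumps and is therefore of lower order.

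The main obstacle in all three cases is making the localization quantitative and controlling the error from boundary curvature. For $\alpha\ge 1$ this means showing that the half-space profile $\P(\overline{Z}^{(\alpha)}_{t}>a)$ approximates the true exit probability uniformly in the relevant layer, up to errors that vanish after integration at the appropriate order in $t$; the natural device is to sandwich $\Omega$ between inscribed and circumscribed balls (or between inner and outer osculating half-spaces) and to use the $C^{1,1}$ bound on $\delta$ to show that the curvature correction is of lower order than $f_{\alpha}(t)$. For $\alpha<1$ the delicate point is instead the uniform integrability needed to pass the heat-kernel asymptotics through the double integral near the diagonal $x=y\in\partial\Omega$, together with the rigorous $o(t)$ estimate for the return contribution; both are where the $C^{1,1}$ (or interval) geometry and the finiteness of $\mathrm{Per}_{\alpha}(\Omega)$ are essential.
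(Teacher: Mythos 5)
This theorem is not proved in the paper: it is quoted verbatim from \cite[Theorem 1.1]{Val2016} (the case $d=1$) and \cite[Theorem 1.1]{PS22} (the case $d\ge 2$), so there is no in-paper proof to compare against. Your outline is, in substance, the strategy those source papers actually follow: reduction to a half-space via boundary-normal coordinates and the identity relating the half-space exit probability to $\P(\overline{Z}^{(\alpha)}_{t}>a)$ for $\alpha\in[1,2)$, with the logarithm for $\alpha=1$ coming from the nonintegrable tail $\P(\overline{Z}^{(1)}_{1}>u)\sim(\pi u)^{-1}$, and the single-jump/fractional-perimeter computation via the heat-kernel bound $p^{Y}(t,x,y)\asymp\min(t^{-d/\alpha},\,t|x-y|^{-d-\alpha})$ for $\alpha\in(0,1)$. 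The constants you extract match \eqref{def:f_alpha} in all three regimes. Be aware, though, that what you present is a plan rather than a proof: the two steps you flag as obstacles are genuinely where essentially all of the work in \cite{Val2016,PS22} lies, namely (i) the uniform-in-$a$ comparison between the true exit probability and the half-space profile with curvature errors of order $o(f_{\alpha}(t))$, and (ii) the $o(t)$ bound on $\P_{x}(\tau^{Y}_{\Omega}\le t,\,Y_{t}\in\Omega)$ for $\alpha<1$, for which the heuristic ``requires at least two jumps'' is not by itself an argument (exit can occur through an accumulation of small jumps near $\partial\Omega$, and the rigorous estimate goes through the strong Markov property at $\tau^{Y}_{\Omega}$ together with the distribution of the exit position).
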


The main object of study in this paper is the spectral heat content for time-changed isotropic stable processes. More precisely, let $Y$ be the isotropic stable process discussed above, and 
let $D=\{D_{t}\}_{t\geq0}$ be an independent \textit{subordinator} (one-dimensional L\'evy process with nondecreasing c\`adl\`ag paths starting from 0).
Let $\phi$ denote the Laplace exponent of $D$, so that 
\begin{align}\label{def-laplaceexponent}
\E[e^{-\lam D_{t}}]=e^{-t\phi(\lam)}=\exp\left(-t\int_{0}^{\infty}(1-e^{-\lam x})\nu(dx)\right), \quad \lam,t>0,
\end{align}
with the L\'evy measure $\nu$ satisfying $\int_{0}^{\infty}(1\wedge x)\nu(dx)<\infty$. 
Throughout the paper, we assume that the L\'evy measure is infinite; i.e.,\ $\nu(0,\infty)=\infty$, which is equivalent to saying that $\phi(\lam)\to \infty$ as $\lam\to\infty$. 
The Laplace exponent $\phi$ is strictly increasing since $\phi(\lam_2)-\phi(\lam_1)=\int_0^\infty (e^{-\lam_1 x}-e^{-\lam_2 x})\nu(dx)>0$ whenever $0<\lam_1<\lam_2$. Moreover, $\phi(0^+)=0$, and $\phi$ is a Bernstein function; i.e.,\ $(-1)^n\phi^{(n)}\leq 0$ for all integers $n\ge 1$.
Now, let $E=\{E_{t}\}_{t\geq 0}$ be the inverse of $D$ defined by 
\[
E_{t}=\inf\{u>0 : D_{u}>t\}, \quad t>0.
\]
The condition that the L\'evy measure $\nu$ is infinite implies that $D$ has strictly increasing paths with jump times being dense in $(0,\infty)$ (see \cite[Theorem 21.3]{Sato}). This in turn implies that the sample paths of the inverse $E$ are continuous, nondecreasing, and starting from 0. 
The spectral heat content $Q_{\Omega}^{Y\circ E}(t)$ for the time-changed isotropic stable process $Y\circ E$ is defined by
\begin{align}\label{def:QYE}
Q_{\Omega}^{Y\circ E}(t)=\E[Q^{Y}_{\Omega}(E_t)]=\int_{\Omega}\P_{x}(\tau_{\Omega}^{Y}>E_{t})dx.
\end{align}

We assume that the Laplace exponent $\phi$ of the subordinator $D$ is \textit{regularly varying} with index $\beta$ at $\infty$ or at 0; i.e.,\ for each $a>0$, 
$\frac{\phi(a\lam)}{\phi(\lam)}\to a^{\beta}$ as $\lam\to\infty$ or as $\lam\downarrow 0$.
Let $\mathcal{R}_{\beta}(\infty)$ and $\mathcal{R}_{\beta}(0^+)$ denote the classes of regularly varying functions with index $\beta$ at $\infty$ and at 0, respectively. 
Any function $f\in\mathcal{R}_{\beta}(\infty)$ or $f\in\mathcal{R}_{\beta}(0^+)$ can be rewritten as
$
f(\lam)=\lam^{\beta}\ell(\lam)
$
with some slowly varying function $\ell\in\mathcal{R}_{0}(\infty)$ or $\ell\in\mathcal{R}_{0}(0^+)$ (see \cite[Theorem 1.4.1]{BGT}). By the Tauberian theorem, the condition that $\phi\in \mathcal{R}_{\beta}(\infty)$ or $\phi\in \mathcal{R}_{\beta}(0^+)$ determines the behavior of the subordinator $D$ at 0 or $\infty$.
Examples of Laplace exponents include $\phi(\lam)=\lam^\beta$ with $\beta\in(0,1)$, which belongs to $\mathcal{R}_{\beta}(\infty)\cap \mathcal{R}_{\beta}(0^+)$ and corresponds to a $\beta$-stable subordinator, and $\phi(\lam)=(\lam+\kappa)^\beta-\kappa^\beta$ with $\beta\in(0,1)$ and $\kappa>0$, which belongs to $\mathcal{R}_{\beta}(\infty)\cap \mathcal{R}_{1}(0^+)$ and corresponds to a tempered stable subordinator.

\section{The Large-Time Behavior}\label{subsection:Large Time Behavior}
This section studies the large-time asymptotic behavior of the spectral heat content for time-changed stable processes when the time change is given by either a subordinator $D$ with infinite L\'evy measure or its inverse $E$. In either case, $\phi$ denotes the Laplace exponent of the underlying subordinator $D$. 
The main assumption of this section is:
\[
\phi\in \mathcal{R}_{\beta}(0^+) \textrm{ with } \beta\in [0,1) \text{ and } \Omega \text{ is a bounded open set}.
\]

We first discuss the case of an inverse subordinator $E$. Our argument relies on two lemmas. The first lemma concerns the exact expression of the double Laplace transform of $E$, while the second concerns the spectral heat content $Q_{\Omega}^Y(t)$ for the stable process $Y$ at $t=0$.
For the remainder of the paper, let the expression $\LL_t[f(t)](s)$ denote the Laplace transform of a function $f$ whenever it exists; i.e.,\ $\LL_t[f(t)](s)=\int_0^\infty f(t)e^{-st}dt$. 

\begin{lemma}[{\cite[Lemma 2]{Kei}}]\label{lemma:LT}
Let $E$ be the inverse of a subordinator $D$ with Laplace exponent $\phi$. Then for any fixed $a>0$, the Laplace transform of the function $t\mapsto \E[e^{-aE_{t}}]$ exists and is given by
\[
\LL_{t}\bigl[\E[e^{-aE_{t}}]\bigr](s)=\frac{\phi(s)}{s}\frac{1}{\phi(s)+a}, \quad s>0.
\]
\end{lemma}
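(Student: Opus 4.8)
The plan is to compute the double Laplace transform by interchanging the order of integration and exploiting the defining duality between the inverse subordinator $E$ and the subordinator $D$. Since every integrand in sight is nonnegative, Tonelli's theorem will justify all the interchanges, so the only genuinely structural input is the pathwise relation between $E$ and $D$.

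First I would rewrite $\E[e^{-aE_{t}}]$ in terms of the distribution function of $E_{t}$. Using the elementary identity $e^{-au}=a\int_{u}^{\infty}e^{-av}\,dv$ together with Tonelli, one obtains
\beq
\E[e^{-aE_{t}}]=a\int_{0}^{\infty}e^{-av}\,\P(E_{t}\le v)\,dv.
\eeq
Next I would invoke the duality $\{E_{t}\le v\}=\{D_{v}\ge t\}$, valid because $D$ has strictly increasing c\`adl\`ag paths (the L\'evy measure is infinite) and $E$ is its continuous inverse; this converts the event about $E$ into one about $D$, giving $\P(E_{t}\le v)=\P(D_{v}\ge t)$. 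Applying $\LL_{t}[\,\cdot\,](s)$ and then swapping the $t$- and $v$-integrals by Tonelli, I would reduce the problem to the inner transform $\int_{0}^{\infty}e^{-st}\P(D_{v}\ge t)\,dt$.

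The inner transform is where the Laplace exponent enters. Writing $\P(D_{v}\ge t)=\E[\mathbf{1}_{\{t\le D_{v}\}}]$ and interchanging once more, I would evaluate
\beq
\int_{0}^{\infty}e^{-st}\,\P(D_{v}\ge t)\,dt=\E\Bigl[\int_{0}^{D_{v}}e^{-st}\,dt\Bigr]=\frac{1-\E[e^{-sD_{v}}]}{s}=\frac{1-e^{-v\phi(s)}}{s},
\eeq
using the defining relation \eqref{def-laplaceexponent}. Plugging this back leaves the single elementary integral $\frac{a}{s}\int_{0}^{\infty}e^{-av}\bigl(1-e^{-v\phi(s)}\bigr)\,dv$, which splits into two exponential integrals and collapses to $\frac{a}{s}\bigl(\frac{1}{a}-\frac{1}{a+\phi(s)}\bigr)=\frac{\phi(s)}{s}\frac{1}{\phi(s)+a}$, as claimed.

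The main obstacle is not computational but conceptual: pinning down the duality $\{E_{t}\le v\}=\{D_{v}\ge t\}$ at the level of inequalities (strict versus non-strict) and checking that it holds almost surely. I expect this to be clean here precisely because the infinite L\'evy measure forces $D$ to be strictly increasing, so the inverse $E$ is continuous and the two events can differ only on the null set where $D_{v}=t$. All integrability issues are sidestepped by nonnegativity, and the finiteness of the final answer for $s>0$ confirms that Tonelli was applicable throughout.
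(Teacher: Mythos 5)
Your argument is correct and complete: the identity $e^{-au}=a\int_u^\infty e^{-av}\,dv$, the duality $\{E_t\le v\}=\{D_v\ge t\}$ (valid here since the infinite L\'evy measure makes $D$ strictly increasing), and the evaluation $\int_0^\infty e^{-st}\P(D_v\ge t)\,dt=(1-e^{-v\phi(s)})/s$ combine exactly as you describe to give $\frac{\phi(s)}{s}\frac{1}{\phi(s)+a}$. The paper does not prove this lemma itself but cites it from \cite{Kei}, and your derivation is precisely the standard one behind that cited result, so there is nothing to add.
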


\begin{lemma}\label{lemma:expression at 0}
Let $\Omega$ be a bounded open set.  
Let $Y$ be an isotropic stable process and $\{(\lam_{n}^{Y}, \psi_{n}^{Y})\}_{n=1}^\infty$ be the eigenpairs of the infinitesimal generator for the associated killed process. 
 Then the map $t\mapsto Q_{\Omega}^Y(t)$ is right-continuous at 0. Furthermore,  
\[
Q_{\Omega}^Y(0^+)=|\Omega|=\sum_{n=1}^{\infty}\left(\int_{\Omega}\psi^Y_{n}(x)dx\right)^{2}.
\]
\end{lemma}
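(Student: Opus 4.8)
The plan is to read off both equalities and the right-continuity directly from the two representations of $Q^Y_\Omega$ already available, namely the probabilistic definition \eqref{def:spectralheatcontent} and the spectral series \eqref{eqn:SH2}, interchanging limit with summation/integration by monotonicity. Throughout write $c_n:=\int_\Omega \psi^Y_n(x)\,dx$, so that \eqref{eqn:SH2} reads $Q^Y_\Omega(t)=\sum_{n=1}^\infty e^{-\lam^Y_n t}c_n^2$ for $t>0$.

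First I would treat the spectral side. By \eqref{eqn:eigenvalues} we have $\lam^Y_n>0$ for every $n$, so each summand $t\mapsto e^{-\lam^Y_n t}c_n^2$ is nonnegative and nonincreasing in $t$, and increases to $c_n^2$ as $t\downarrow 0$. Consequently $t\mapsto Q^Y_\Omega(t)$ is nonincreasing on $(0,\infty)$, so its right-limit at $0$ exists in $[0,\infty]$; applying the monotone convergence theorem to the series (viewed as an integral against counting measure) along any sequence $t_k\downarrow 0$ gives
\[
Q^Y_\Omega(0^+)=\lim_{t\downarrow 0}\sum_{n=1}^\infty e^{-\lam^Y_n t}c_n^2=\sum_{n=1}^\infty c_n^2 .
\]
This already yields the right-continuity at $0$ and the second equality of the lemma, provided the next step identifies the common value as $|\Omega|$.

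Next I would identify $Q^Y_\Omega(0^+)$ with $|\Omega|$ from the probabilistic definition \eqref{def:spectralheatcontent}. Fix $x\in\Omega$. Since $\Omega$ is open it contains a ball $B(x,r)$, and because $Y$ has c\`adl\`ag paths with $Y_0=x$ under $\P_x$, the path remains in $B(x,r)\subset\Omega$ for all sufficiently small times; hence $\tau^Y_\Omega>0$ holds $\P_x$-a.s., i.e.\ $\P_x(\tau^Y_\Omega>0)=1$. As $t\downarrow 0$ the events $\{\tau^Y_\Omega>t\}$ increase to $\{\tau^Y_\Omega>0\}$, so $\P_x(\tau^Y_\Omega>t)\uparrow 1$ for every $x\in\Omega$. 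Since $|\Omega|<\infty$, a second application of monotone convergence, now to the integral over $\Omega$, gives
\[
Q^Y_\Omega(0^+)=\lim_{t\downarrow 0}\int_\Omega \P_x(\tau^Y_\Omega>t)\,dx=\int_\Omega 1\,dx=|\Omega| .
\]
Combining the two displays yields $\sum_{n=1}^\infty c_n^2=Q^Y_\Omega(0^+)=|\Omega|$, which is precisely the asserted identity, and the right-continuity at $0$ was established above.

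I do not expect a genuine obstacle here: the only inputs beyond the two representations are the a.s.\ positivity of the exit time for an interior starting point (immediate from right-continuity of the paths) and the two monotone passages to the limit, both licensed by the nonnegativity and monotonicity noted above. The one point to state with care is that the interchange of limit and sum rests on monotone convergence rather than uniform convergence, so no summability of $\{c_n^2\}$ is assumed a priori; it is instead a byproduct of the identification with $|\Omega|<\infty$. Alternatively one could bypass the probabilistic step by invoking completeness of $\{\psi^Y_n\}$ as an orthonormal basis of $L^2(\Omega)$ and applying Parseval's identity to $\mathbf{1}_\Omega$, giving $\sum_n c_n^2=\|\mathbf{1}_\Omega\|_{L^2}^2=|\Omega|$ at once; I would keep the self-contained argument above as the main line and mention this only in passing.
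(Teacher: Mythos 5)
Your proposal is correct and follows essentially the same route as the paper: compute $Q^Y_\Omega(0^+)$ once from the probabilistic definition \eqref{def:spectralheatcontent} (the paper uses dominated convergence where you use monotone convergence, an immaterial difference) and once from the spectral series \eqref{eqn:SH2} via monotone convergence, then equate the two limits. Your extra remarks on the a.s.\ positivity of the exit time and the Parseval alternative are fine but not needed beyond what the paper already records.
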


\begin{proof}
By the right-continuity of the sample paths of $Y$, $\lim_{t\downarrow 0}\P_{x}(\tau_{\Omega}^Y>t)=\P_{x}(\tau_{\Omega}^Y>0)=1$ for all $x\in \Omega$. Since $|\Omega|<\infty$, 
it follows from representation \eqref{def:spectralheatcontent} and the dominated convergence theorem that
$\lim_{t\downarrow 0}Q_{\Omega}^Y(t)=|\Omega|.$ 
On the other hand, since the function $t\mapsto e^{-\lam_{n}^Yt}$ increases as $t$ decreases for any fixed $n$, the monotone convergence theorem applied to the alternative representation of $Q_{\Omega}^Y(t)$ in \eqref{eqn:SH2} yields
$\lim_{t\downarrow 0}Q_{\Omega}^Y(t)=\sum_{n=1}^{\infty}\left(\int_{\Omega}\psi_{n}^Y(x)dx\right)^{2}.$ This completes the proof.
\end{proof}

\begin{proof}[Proof of Theorem \ref{thm:SHC large time}]
By \eqref{eqn:SH2} and \eqref{def:QYE},
\beq\label{eqn:SHC IS}
Q_{\Omega}^{Y\circ E}(t)=\E[Q_{\Omega}^Y(E_{t})]=\sum_{n=1}^{\infty}E[e^{-\lam^{Y}_{n}E_{t}}]\left(\int_{\Omega}\psi^{Y}_{n}(x)dx \right)^{2},
\eeq
where the expectation and summation signs are interchangeable since the integrand is nonnegative. 
Express $\phi\in \mathcal{R}_\beta(0^+)$ as $\phi(s)=s^{\beta}\ell(s)$ using some $\ell \in \mathcal{R}_0(0^+)$.
 By Lemma \ref{lemma:LT} and the fact that $\phi(0^+)=0$,
\beq\label{eqn:double LT}
\mathcal{L}_{t}\bigl[\E[e^{-\lam^{Y}_{n}E_{t}}]\bigr](s)=\frac{\phi(s)}{s}\frac{1}{\phi(s)+\lam^{Y}_{n}}\sim \frac{\ell(s)}{s^{1-\beta}}\frac{1}{\lam^{Y}_{n}} \  \textrm{as} \ s\downarrow 0.
\eeq
It follows from Karamata's Tauberian Theorem \cite[Theorem 1.7.1]{BGT} that 
$
\int_{0}^{t}\E[e^{-\lam^{Y}_{n}E_{u}}]du \sim \frac{t^{1-\beta}\ell(1/t)}{\lam^{Y}_{n}\Gamma(2-\beta)}
$
as $t\to\infty$,
and hence, by the monotone density theorem (\cite[Theorem 1.7.2]{BGT}), 
\beq\label{eqn:E asymp}
\E[e^{-\lam^{Y}_{n}E_{t}}]\sim \frac{t^{-\beta}\ell(1/t)}{\lam^{Y}_{n}\Gamma(1-\beta)}
=\frac{\phi(1/t)}{\lam^{Y}_{n}\Gamma(1-\beta)}
 \text{ as } t\to\infty.
\eeq
In particular, for $n=1$, there exists $M>0$ such that $\bigl|[\phi(1/t)]^{-1}\E[e^{-\lam^{Y}_{1}E_{t}}]-\frac{1}{\lam^{Y}_{1}\Gamma(1-\beta)}\bigr| <1$ for all $t>M$.
Then it follows from \eqref{eqn:eigenvalues} that for all $t>M$ and $n\ge 1$,
\[
	[\phi(1/t)]^{-1}\E[e^{-\lam^{Y}_{n}E_{t}}]\left(\int_{\Omega}\psi^{Y}_{n}(x)dx\right)^{2}<\left(1+\frac{1}{\lam^{Y}_{1}\Gamma(1-\beta)}\right)\left(\int_{\Omega}\psi^{Y}_{n}(x)dx\right)^{2}.
\]
The latter is summable due to the assumption that $|\Omega|<\infty$ and Lemma \ref{lemma:expression at 0}; therefore, the dominated convergence theorem together with \eqref{eqn:SHC IS} and \eqref{eqn:E asymp} yields
$
\lim_{t\to\infty}[\phi(1/t)]^{-1}Q_{\Omega}^{Y\circ E}(t)
=\sum_{n=1}^{\infty}\frac{(\int_{\Omega}\psi^{Y}_{n}(x)dx)^{2}}{\lam^{Y}_{n}\Gamma(1-\beta)},
$
as desired.
\end{proof}

We now turn our attention to the case when the time-change is a \textit{subordinator} $D$ independent of the isotropic stable process $Y$. 
The spectral heat content for the time-changed stable process $Y\circ D$ on a bounded domain $\Omega$ is defined by 
\[
	Q^{Y\circ D}_{\Omega}(t)=\E[Q^{Y}_{\Omega}(D_{t})].
\]
Note that $Q^{Y\circ D}_{\Omega}(t)$ corresponds to the spectral heat content for \textit{subordinate killed stable processes}.
The large-time behavior of this quantity is given by the following proposition.

\begin{prop}\label{prop:SHC S}
Let $Y$ be an isotropic stable process and $\{(\lam_{n}^{Y}, \psi_{n}^{Y})\}_{n=1}^\infty$ be the eigenpairs of the infinitesimal generator for the associated killed process. Let $D$ be a subordinator with Laplace exponent $\phi$. Suppose $Y$ and $D$ are independent. 
Let $\Omega\subset \R^{d}$ be a bounded open set in $\R^{d}$.
Then 
\[
	\ln Q^{Y\circ D}_{\Omega}(t)\sim -t\phi(\lam_{1}^{Y}) \ \textrm{as} \ t\to\infty.
\]
\end{prop}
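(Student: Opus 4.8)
The plan is to reduce everything to the spectral series \eqref{eqn:SH2} and exploit the explicit Laplace transform of a subordinator. First I would combine the definition $Q^{Y\circ D}_{\Omega}(t)=\E[Q^{Y}_{\Omega}(D_{t})]$ with \eqref{eqn:SH2} and the defining identity $\E[e^{-\lam D_{t}}]=e^{-t\phi(\lam)}$ from \eqref{def-laplaceexponent}, interchanging expectation and summation by nonnegativity, to obtain
\[
Q^{Y\circ D}_{\Omega}(t)=\sum_{n=1}^{\infty}e^{-t\phi(\lam^{Y}_{n})}a_{n}, \qquad a_{n}:=\left(\int_{\Omega}\psi^{Y}_{n}(x)dx\right)^{2}\ge 0.
\]
By Lemma \ref{lemma:expression at 0} the coefficients are summable, with $\sum_{n}a_{n}=|\Omega|<\infty$, so this series converges absolutely for every $t>0$.

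Next I would isolate the leading term by factoring out $e^{-t\phi(\lam^{Y}_{1})}$, writing
\[
e^{t\phi(\lam^{Y}_{1})}Q^{Y\circ D}_{\Omega}(t)=a_{1}+\sum_{n=2}^{\infty}e^{-t\bigl(\phi(\lam^{Y}_{n})-\phi(\lam^{Y}_{1})\bigr)}a_{n}.
\]
Because $\phi$ is strictly increasing and \eqref{eqn:eigenvalues} gives $\lam^{Y}_{1}<\lam^{Y}_{n}$ for every $n\ge 2$, each exponent $\phi(\lam^{Y}_{n})-\phi(\lam^{Y}_{1})$ is strictly positive, so the $n$-th summand tends to $0$ as $t\to\infty$ while remaining bounded by the summable majorant $a_{n}$. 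Dominated convergence then forces the tail sum to vanish, giving $e^{t\phi(\lam^{Y}_{1})}Q^{Y\circ D}_{\Omega}(t)\to a_{1}$.

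The main obstacle is to guarantee that the limiting constant $a_{1}$ is strictly positive, since otherwise the leading exponential order would be set by the first index $n$ with $a_{n}\neq 0$ rather than by $\lam^{Y}_{1}$. This is where I would invoke the positivity of the ground state: the principal eigenvalue $\lam^{Y}_{1}$ is simple (the strict inequality in \eqref{eqn:eigenvalues}) and the Dirichlet semigroup $T^{Y,\Omega}_{t}$ is positivity-improving, so the principal eigenfunction $\psi^{Y}_{1}$ may be taken strictly positive on $\Omega$; consequently $\int_{\Omega}\psi^{Y}_{1}(x)dx>0$ and $a_{1}>0$.

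Finally, with $a_{1}>0$ established, I would take logarithms to get $\ln Q^{Y\circ D}_{\Omega}(t)=-t\phi(\lam^{Y}_{1})+\ln a_{1}+o(1)$. Since $\lam^{Y}_{1}>0$ and $\phi$ is strictly increasing with $\phi(0^{+})=0$, the quantity $\phi(\lam^{Y}_{1})$ is strictly positive, so dividing through by $-t\phi(\lam^{Y}_{1})$ shows the ratio tends to $1$, which is precisely the claim $\ln Q^{Y\circ D}_{\Omega}(t)\sim -t\phi(\lam^{Y}_{1})$.
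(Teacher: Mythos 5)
Your proposal is correct and follows essentially the same route as the paper: rewrite $Q^{Y\circ D}_{\Omega}(t)$ as the spectral series $\sum_{n}e^{-t\phi(\lam^{Y}_{n})}a_{n}$ via \eqref{eqn:SH2} and \eqref{def-laplaceexponent}, and read off the exponential rate from $0<\phi(\lam^{Y}_{1})<\phi(\lam^{Y}_{2})\le\cdots$. You are in fact more careful than the paper, which leaves implicit both the dominated-convergence step isolating the leading term and the crucial fact that $a_{1}=\bigl(\int_{\Omega}\psi^{Y}_{1}\bigr)^{2}>0$ (guaranteed by the strict positivity of the principal eigenfunction); making these explicit is a genuine improvement, not a deviation.
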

\begin{proof}
By representations \eqref{eqn:SH2} and \eqref{def-laplaceexponent}, the spectral heat content $Q^{Y\circ D}_{\Omega}(t)$ can be re-expressed as  
\[
Q^{Y\circ D}_{\Omega}(t)=\sum_{n=1}^{\infty}\E[e^{-\lam^Y_{n}D_{t}}]\left(\int_{\Omega}\psi^{Y}_{n}(x)dx\right)^{2}
=\sum_{n=1}^{\infty}e^{-t\phi(\lam^{Y}_{n})}\left(\int_{\Omega}\psi^{Y}_{n}(x)dx\right)^{2},
\]
where the expectation and summation signs are interchangeable since the integrand is nonnegative. 
Since the Laplace exponent $\phi$ is strictly increasing and the eigenvalues $\{\lam^Y_n\}$ satisfy \eqref{eqn:eigenvalues}, it follows that $0<\phi(\lam^{Y}_1)<\phi(\lam^{Y}_2)\leq\cdots$. This together with the above representation of $Q^{Y\circ D}_{\Omega}(t)$ yields the desired conclusion. 
\end{proof}

\begin{remark}\label{remark:large}
\begin{em}
1) Theorem \ref{thm:SHC large time} shows that $Q_{\Omega}^{Y\circ E}(t)$ exhibits a polynomial decay as $t\to\infty$, which is in sharp contrast to an exponential decay for $Q_{\Omega}^{Y\circ D}(t)$ indicated by Proposition \ref{prop:SHC S}.
Moreover, 
the information about all the eigenpairs $\{(\lam^{Y}_{n},\psi^{Y}_{n})\}_{n=1}^{\infty}$ appears in the limiting expression for $Q_{\Omega}^{Y\circ E}(t)$, whereas only $\lam_{1}^{Y}$ plays a major role in the large-time behavior of $Q_{\Omega}^{Y\circ D}(t)$. 
The difference between the decay rates of $Q_{\Omega}^{Y\circ E}(t)$ and $Q_{\Omega}^{Y\circ D}(t)$ can be ascribed to the fact the introduction of the inverse subordinator $E$ as a time change makes the heat particles diffuse at a slower pace than those with the subordinator $D$ incorporated as a time change. 

2) The decay rate of $Q_{\Omega}^{Y\circ E}(t)$ is determined by the Laplace exponent $\phi$ of $D$ through the condition $\phi\in\mathcal{R}_{\beta}(0^+)$, $\beta\in [0,1)$.
For example, if $\phi(\lam)$ is given by $\phi(\lam)=\lam^{a}+\lam^{b}$ with $0\leq a<b\leq 1$, which implies that the time change $E$ is given by the inverse of the sum of independent stable subordinators with different indices $a$ and $b$, then since $\phi(\lam)\sim \lam^{a}$ as $\lam\downarrow 0$, the large-time asymptotic behavior of $Q_{\Omega}^{W\circ E}(t)$ is given by a 
constant multiple of $t^{-a}$.

3) Theorem \ref{thm:SHC large time} does not include the case when $\beta=1$ since the argument given in \eqref{eqn:SHC IS}--\eqref{eqn:double LT} would fail if $\beta=1$. We believe that the case when $\beta=1$ gives an exponential decay rather than the polynomial decay. 
In fact, if the time change is given by $E_t = t$, in which case $\phi(\lam)=\lam \in \mathcal{R}_1(0^+)$, 
then Proposition \ref{prop:SHC S} implies that 
$\ln Q^{Y}_{\Omega}(t)\sim -t\lam_{1}^{Y}$ as $t\to\infty$.

4) Theorem \ref{thm:SHC large time} and Proposition \ref{prop:SHC S} are stated with the outer process $Y$ taken to be an isotropic stable process; however, similar statements actually hold for much more general outer processes. 
In fact, as long as the killed process associated with $Y$ has transition density with representation of the form \eqref{eqn:HK}, the proofs given to Theorem \ref{thm:SHC large time} and Proposition \ref{prop:SHC S} continue to work.
\end{em}
\end{remark}

\section{The Small-Time Behavior}\label{subsection:Small Time Behavior}
This section is devoted to the analysis of the spectral heat content for time-changed stable processes $Y\circ E$ as $t\downarrow 0$. 
The main assumption of this section is: 
\begin{align*}
&\phi\in \mathcal{R}_{\beta}(\infty) \ \textrm{with} \ \beta\in (0,1) \text{ and }\\
&\Omega \text{ is a bounded } C^{1,1} \text{ open set if }d\geq 2 \text{ or a bounded open interval if }d=1.
\end{align*}
Our argument builds upon \cite[Propositions 2.2 and 4.2]{KP} and the ideas presented in their proofs. 
However, they do not immediately yield Theorem \ref{thm:SHC small time} when the stability index of $Y$ is $\alpha=1$. 
More precisely, the threshold case $\alpha=1$ requires the analysis of the small-time behavior of $\E[E_{t}\ln(1/E_{t})]$,
but as the map $x\mapsto x\ln(1/x)$ is not monotone, 
the method of finding the asymptotic behavior based on 
Karamata's Tauberian theorem and the monotone density theorem is not directly applicable.
To overcome this difficulty, let us introduce the following \textit{monotonized} function $V(x)$ of $x\ln(1/x)$:
\beq\label{eqn:F}
V(x)=x\ln(1/x)\mathbf{1}_{\{0<x\leq e^{-1}\}} + e^{-1}\mathbf{1}_{\{x> e^{-1}\}}, \quad x>0.
\eeq
Clearly, $V(x)$ is nondecreasing on $(0,\infty)$ and it agrees with $x\ln(1/x)$ when $0<x\leq e^{-1}$. 

To establish the small-time behavior of $\E[E_{t}\ln(1/E_{t})]$, we need two lemmas below.
Note that throughout this section, the notation $\E[X; A]$ is often used to represent the expectation $\E[X\mathbf{1}_A]$. 

\begin{lemma}[{\cite[Equations (4.11) and (4.6)]{KP}}]
Let $E$ be the inverse of a subordinator $D$ with Laplace exponent $\phi\in\mathcal{R}_\beta(\infty)$ with $\beta\in(0,1)$. Then for any $p>0$ and $\delta>0$, there exist constants $c_1,c_2>0$ and functions $\ell_1,\ell_2\in \mathcal{R}_{0}(0^+)$ such that
\begin{align}
\label{eqn:small1} &-\ln\P(E_{t}>\delta)=-\ln\P(D_{\delta}<t)\sim c_1t^{-\frac{\beta}{1-\beta}}\ell_1(t) \text{ as } t\downarrow 0;\\
\label{eqn:small2} &-\ln\E[E_{t}^{p}; E_{t}>\delta]\sim c_2 t^{-\frac{\beta}{1-\beta}}\ell_{2}(t) \text{ as } t\downarrow 0.
\end{align}
\end{lemma}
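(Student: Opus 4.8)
The plan is to reduce both asymptotics to the lower-tail behavior of $D_\delta$ as $t\downarrow 0$, exploiting the duality identity $\{E_t>u\}=\{D_u<t\}$ (already recorded in the statement for $u=\delta$), which holds because the strict monotonicity of $D$ forces the first passage of level $t$ to occur strictly after time $u$ precisely when $D_u<t$. First I would dispose of \eqref{eqn:small1} by invoking the exponential (de Bruijn--Kasahara) Tauberian theorem \cite[Theorem 4.12.9]{BGT} applied to $D_\delta$. Since $-\ln\E[e^{-\lambda D_\delta}]=\delta\phi(\lambda)$ and $\phi\in\mathcal{R}_\beta(\infty)$ with $\beta\in(0,1)$, writing $\phi(\lambda)=\lambda^\beta\ell(\lambda)$, the theorem converts the regular variation of the log-Laplace transform at $\infty$ into regular variation of $-\ln\P(D_\delta<t)$ at $0^+$, with the exponent transformed by the Legendre-type relation $\beta\mapsto \beta/(1-\beta)$ and the slowly varying part $\ell_1$ determined by the de Bruijn conjugate of $\ell$. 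This yields $-\ln\P(E_t>\delta)=-\ln\P(D_\delta<t)\sim c_1 t^{-\beta/(1-\beta)}\ell_1(t)$ with $c_1$ an explicit constant depending on $\beta$ and $\delta$.

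For \eqref{eqn:small2} I would sandwich $\E[E_t^p;E_t>\delta]$ between two quantities sharing the rate $r(t):=-\ln\P(D_\delta<t)$. The lower bound is immediate from $\E[E_t^p;E_t>\delta]\ge \delta^p\P(E_t>\delta)$, giving $-\ln\E[E_t^p;E_t>\delta]\le -p\ln\delta + r(t)$. For the matching upper bound I would use the layer-cake identity
\[
\E[E_t^p;E_t>\delta]=\delta^p\P(E_t>\delta)+p\int_\delta^\infty u^{p-1}\P(D_u<t)\,du
\]
together with the exponential Markov (Chernoff) bound $\P(D_u<t)\le e^{\lambda t-u\phi(\lambda)}$, valid for every $\lambda>0$. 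Choosing $\lambda=\lambda(t)\to\infty$ as the minimizer of $\lambda t-\delta\phi(\lambda)$, so that $\lambda(t)t-\delta\phi(\lambda(t))$ matches $-r(t)$ to leading order, and factoring out $e^{-(u-\delta)\phi(\lambda(t))}$, the tail integral becomes $e^{-r(t)}\int_\delta^\infty p u^{p-1}e^{-(u-\delta)\phi(\lambda(t))}\,du$. Since $\phi(\lambda(t))\to\infty$, a Laplace/Watson estimate concentrated at the endpoint $u=\delta$ shows this integral is $O\bigl(1/\phi(\lambda(t))\bigr)$ for every $p>0$, hence subdominant to $e^{-r(t)}$. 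Taking logarithms then gives $-\ln\E[E_t^p;E_t>\delta]=r(t)+O(1)\sim r(t)$, so \eqref{eqn:small2} holds, and in fact one may take $c_2=c_1$ and $\ell_2=\ell_1$.

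The hard part will be the upper bound in \eqref{eqn:small2}: one must certify that neither the polynomial weight $E_t^p$ nor the integration over $\{E_t>u\}$ for $u>\delta$ perturbs the leading exponential rate. The delicate point is that the optimizing parameter $\lambda(t)$ depends on $t$ and tends to infinity as $t\downarrow 0$, so the integral $\int_\delta^\infty p u^{p-1}e^{-(u-\delta)\phi(\lambda(t))}\,du$ must be controlled uniformly; here the regular variation of $\phi$ is exactly what guarantees both that $\phi(\lambda(t))\to\infty$ and that the endpoint contribution is only polynomially small in $\phi(\lambda(t))$, hence negligible against $e^{-r(t)}$. Once this uniform control is secured, the exponential decay rate of $\E[E_t^p;E_t>\delta]$ is governed entirely by the lower tail of $D_\delta$ established in \eqref{eqn:small1}.
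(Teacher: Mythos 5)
This lemma is imported by citation from \cite[Eqs.\ (4.11) and (4.6)]{KP}; the paper under review contains no proof of it, so your proposal can only be compared with the cited source. Your argument is correct and essentially self-contained. For \eqref{eqn:small1} you take the same route as the source: the identity $\{E_t>\delta\}=\{D_\delta< t\}$ (up to the null event $\{D_\delta=t\}$, which has probability zero because the L\'evy measure is infinite) reduces the claim to a lower-tail estimate for $D_\delta$, and de Bruijn's exponential Tauberian theorem \cite[Theorem 4.12.9]{BGT} applied to $-\ln\E[e^{-\lambda D_\delta}]=\delta\phi(\lambda)\in\mathcal{R}_\beta(\infty)$ delivers exactly the stated rate with the $\beta\mapsto\beta/(1-\beta)$ conjugation. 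For \eqref{eqn:small2} your sandwich via the layer-cake identity and the Chernoff bound is a clean alternative that moreover yields the sharper conclusion $c_2=c_1$, $\ell_2=\ell_1$ (the lemma only asserts existence of some $c_2,\ell_2$). Two points deserve explicit justification rather than a passing phrase. First, the claim that $\lambda(t)t-\delta\phi(\lambda(t))=-\sup_{\lambda>0}\bigl(\delta\phi(\lambda)-\lambda t\bigr)$ ``matches $-r(t)$ to leading order'' is not a consequence of the Chernoff bound alone (which gives only $\sup_\lambda(\delta\phi(\lambda)-\lambda t)\le r(t)$); the matching asymptotic $\sup_\lambda(\delta\phi(\lambda)-\lambda t)\sim r(t)$ is precisely the content of the Tauberian theorem (the Legendre conjugate of $\delta\phi$ is the regularly varying function appearing in \eqref{eqn:small1}), so you should cite it again here. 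Second, your summary ``$-\ln\E[E_t^p;E_t>\delta]=r(t)+O(1)$'' is an overstatement: the bound you actually obtain is $\Lambda^*(t)-O(1)\le-\ln\E[E_t^p;E_t>\delta]\le r(t)+p\ln(1/\delta)$ with $\Lambda^*(t)\sim r(t)$, which suffices for the asserted $\sim$ but is not an $O(1)$ error. Neither issue is a gap in substance; with those two clarifications the proof is complete.
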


\begin{lemma}\label{lemma:F}
Let $E$ be the inverse of a subordinator $D$ with Laplace exponent $\phi\in\mathcal{R}_\beta(\infty)$ with $\beta\in(0,1)$. 
Let the function $V(x)$ be defined as in \eqref{eqn:F}. Then 
\[
\E[V(E_t)]\sim \frac{1}{\Gamma(1+\beta)}[\phi(1/t)]^{-1}\ln\phi(1/t) \text{ as } t\downarrow 0.
\]
\end{lemma}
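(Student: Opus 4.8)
The plan is to compute the Laplace transform of $t\mapsto \E[V(E_t)]$ asymptotically as $s\downarrow 0$ and then invert via Karamata's Tauberian theorem together with the monotone density theorem, exactly mirroring the strategy used for $\E[e^{-aE_t}]$ in the proof of Theorem \ref{thm:SHC large time}. The crucial advantage of replacing $x\ln(1/x)$ by the monotonized $V(x)$ is that $V$ is nondecreasing, so $t\mapsto \E[V(E_t)]$ is nondecreasing (since $E_t$ is nondecreasing in $t$ and $V$ is nondecreasing), which is precisely the monotonicity hypothesis required to pass from the Laplace-transform asymptotics back to the asymptotics of the function itself.

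First I would recall the elementary integral identity $V(x)=\int_0^\infty (\text{something})$ that exposes $\E[V(E_t)]$ as a superposition of the quantities $\E[e^{-aE_t}]$ whose Laplace transforms are known from Lemma \ref{lemma:LT}. Concretely, I expect to write $V$ (or its derivative) in a form that lets me integrate Lemma \ref{lemma:LT} against a measure in the parameter $a$; a clean route is to use $\ln(1/x)=\int_0^\infty (e^{-x u}-e^{-u})\,u^{-1}\,du$ on the relevant range, so that formally $x\ln(1/x)$ becomes an integral of $x e^{-xu}$ against $u^{-1}\,du$, and $x e^{-xu}=-\frac{d}{du}e^{-xu}$ ties back to Lemma \ref{lemma:LT}. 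The point is to obtain
\[
\LL_t\bigl[\E[V(E_t)]\bigr](s)\sim \frac{\ln\phi(s)}{s\,\phi(s)}\quad\text{or a constant multiple thereof, as }s\downarrow 0,
\]
using $\phi(s)\to\infty$ as $s\to\infty$ dually, i.e.\ invoking that $\phi\in\mathcal R_\beta(\infty)$ controls $E$ at small times, which via the Tauberian correspondence is the $s\to\infty$ regime of the Laplace variable. I would track the slowly varying factor $\phi(s)=s^\beta\ell(s)$ carefully and confirm that the logarithmic factor $\ln\phi(s)\sim\beta\ln s + \ln\ell(s)$ produces the stated $\ln\phi(1/t)$ after inversion.

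Next, having identified that the Laplace transform behaves like $s^{-(1+\beta)}\ell(s)^{-1}\ln\phi(s)$ up to constants as $s\to\infty$, I would apply Karamata's Tauberian Theorem \cite[Theorem 1.7.1]{BGT} to recover the asymptotics of the primitive $\int_0^t \E[V(E_u)]\,du$ as $t\downarrow 0$, and then the monotone density theorem \cite[Theorem 1.7.2]{BGT}—legitimate precisely because $V$ is monotone and hence $\E[V(E_t)]$ is monotone—to strip the integral and land on
\[
\E[V(E_t)]\sim \frac{1}{\Gamma(1+\beta)}[\phi(1/t)]^{-1}\ln\phi(1/t),
\]
where the constant $1/\Gamma(1+\beta)$ arises from the Tauberian gamma-factor at exponent $1+\beta$. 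The regular variation of $\phi$ guarantees that $\ln\phi(1/t)$ itself is slowly varying, so the product $[\phi(1/t)]^{-1}\ln\phi(1/t)$ is genuinely regularly varying of index $\beta$ (in the variable $1/t$), keeping the Tauberian machinery applicable.

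The main obstacle I anticipate is the rigorous handling of the logarithmic factor under the integral representation: interchanging the $u$-integration with the Laplace transform in $t$, controlling the contribution near $u=0$ and $u=\infty$, and verifying that the resulting transform really picks up the factor $\ln\phi(s)$ rather than some spurious constant or a different slowly varying correction. In particular, one must confirm that the truncation in the definition \eqref{eqn:F} (the replacement of $x\ln(1/x)$ by $e^{-1}$ for $x>e^{-1}$) contributes only a term of order $\P(E_t>e^{-1})$, which by \eqref{eqn:small1} decays faster than any polynomial in $t$ and is therefore negligible against the target $[\phi(1/t)]^{-1}\ln\phi(1/t)$; this justifies that $V$ and $x\ln(1/x)$ have the same leading asymptotics and is the place where Lemma \ref{lemma:F}'s monotonization pays off without altering the answer.
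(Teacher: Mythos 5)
Your proposal follows essentially the same route as the paper: compute the Laplace transform in $t$, identify $\LL_t\bigl[\E[V(E_t)]\bigr](s)\sim \ln\phi(s)/(s\phi(s))$, and invert via Karamata's Tauberian theorem plus the monotone density theorem, with the monotonicity of $V$ (hence of $t\mapsto\E[V(E_t)]$, since $E$ has nondecreasing paths) supplying exactly the hypothesis needed for the last step. The only tactical difference is how the transform is computed: the paper writes $x\ln x=\int_0^x(1+\ln u)\,du$, applies Fubini against the law of $E_t$ together with $\LL_t[\P(E_t>u)](s)=e^{-u\phi(s)}/s$, and substitutes $v=u\phi(s)$, whereas you propose a Frullani representation of $\ln(1/x)$ combined with differentiating Lemma \ref{lemma:LT} in the parameter $a$; both devices produce the same $\ln\phi(s)$ factor.

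Two small corrections to your write-up. First, the relevant Laplace regime for small-time asymptotics is $s\to\infty$, not $s\downarrow 0$ as written in your opening sentence and in your display (you do state the correct duality afterwards, but the formulas should be fixed). Second, your Frullani identity naturally yields the \emph{untruncated} quantity $\E[E_t\ln(1/E_t)]$, and the discrepancy with $\E[V(E_t)]$ is not only $e^{-1}\P(E_t>e^{-1})$ but also $-\E[E_t\ln(1/E_t);\,E_t>e^{-1}]$, whose control requires the tail moment bound \eqref{eqn:small2} in addition to \eqref{eqn:small1}; this is precisely the bookkeeping the paper postpones to Proposition \ref{prop:asym log}, and it is avoided entirely if you carry the truncation at $e^{-1}$ through the transform computation, as the paper does.
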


\begin{proof}
By \cite[Equation (4.5)]{KP}, for any fixed $u>0$, the Laplace transform of $t\mapsto \P(E_{t}>u)$ is given by 
\beq\label{eqn:aux1}
\mathcal{L}_t[\P(E_{t}>u)](s)
=\int_{0}^{\infty}e^{-st}\P(D_{u}<t)dt=\frac{e^{-u\phi(s)}}{s}.
\eeq
Define 
\[
	g(t)=\E[E_t \ln(E_t); E_t\le e^{-1}].
\]
Then since $\frac{d}{du}(u\ln u)=1+\ln u$,  
\begin{align*}
g(t)
&=\int_{0}^{e^{-1}}(x\ln x)\P(E_t\in dx)
=\int_{0}^{e^{-1}}\biggl(\int_{0}^{x}(1+\ln u)du\biggr)\P(E_t\in dx)\\
&=\int_{0}^{e^{-1}}\biggl((1+\ln u)\int_{u}^{e^{-1}}\P(E_t\in dx)\biggr)du
=\int_{0}^{e^{-1}}(1+\ln u) \P(u<E_t\le e^{-1})du,
\end{align*}
where the Fubini Theorem is applicable since $\ln x+1<0$ for all $x\in(0,e^{-1}]$. 
By the identity $\P(u<E_t\le e^{-1})=\P(E_t>u)-\P(E_t>e^{-1})$ and formula \eqref{eqn:aux1}, the Laplace transform of $g(t)$ is calculated as
$
	\mathcal{L}_t[g(t)](s) 
	=\frac 1s\int_{0}^{e^{-1}}(1+\ln u) \bigl( e^{-u\phi(s)}-e^{-e^{-1}\phi(s)}\bigr)\,du.
$
By integration by parts and change of variables via $v=u\phi(s)$, 
\begin{align*}
	\mathcal{L}_t[g(t)](s) 
	&=\frac 1s\biggl(\biggl[ u\ln u  \Bigl( e^{-u\phi(s)}-e^{-e^{-1}\phi(s)}\Bigr)\biggr]_0^{e^{-1}} +\phi(s) \int_0^{e^{-1}} (u\ln u) e^{-u\phi(s)}\,du\biggr)\\
	&=\frac {1}{s}  \int_0^{e^{-1}\phi(s)} \frac{v}{\phi(s)}\ln \biggl(\frac{v}{\phi(s)}\biggr) e^{-v}\,dv\\
	&=\frac {1}{s\phi(s)}  \int_0^{e^{-1}\phi(s)} (v\ln v)e^{-v}\,dv
		-\frac {\ln\phi(s)}{s\phi(s)}  \int_0^{e^{-1}\phi(s)} ve^{-v}\,dv\\
	&=: I_1(s)-I_2(s).
\end{align*}
Since $\phi(s)\in \mathcal{R}_{\beta}(\infty)$ with $\beta>0$, it follows from \cite[Proposition 1.3.6 (v)]{BGT} that $\lim_{s\to\infty}\phi(s)=\infty$, and hence,
$
	\lim_{s\to \infty} \frac{s\phi(s)}{\ln\phi(s)}I_2(s)=\int_0^\infty ve^{-v}\,dv=1.
$
On the other hand, since $\int_0^\infty v |\ln v|e^{-v}\,dv<\infty$, it follows that 
$
	\limsup_{s\to \infty} \bigl|\frac{s\phi(s)}{\ln\phi(s)}I_1(s)\bigr|
	\le \limsup_{s\to \infty} \frac{1}{\ln\phi(s)}\int_0^{e^{-1}\phi(s)} v|\ln v|e^{-v}\,dv=0.
$
Therefore, 
\beq\label{eqn:aux2}
	\mathcal{L}_t[g(t)](s) \sim -\frac{\ln\phi(s)}{s\phi(s)} 
	\ \textrm{as} \ s\to\infty.
\eeq

Now, note that 
$
\E[V(E_t)]=-g(t) +e^{-1}\P(E_{t}>e^{-1}).
$
Then by \eqref{eqn:aux1} and \eqref{eqn:aux2}, 
\[
\mathcal{L}_t[\E[V(E_t)]](s)\sim \frac{\ln\phi(s)}{s\phi(s)}=\frac{\ln\phi(s)}{\ell(s)}s^{-(1+\beta)} \ \textrm{as} \ s\to\infty,
\]
where we expressed $\phi(s)$ as $\phi(s)=s^\beta \ell(s)$ for some $\ell\in\mathcal{R}_0(\infty)$. 
By Karamata's Tauberian Theorem \cite[Theorem 1.7.1]{BGT},
the latter yields
$
\int_{0}^{t}\E[V(E_{u})]du \sim \frac{\ln\phi(t^{-1})}{\ell(t^{-1})}\frac{t^{1+\beta}}{\Gamma(2+\beta)}  
$
as $t\downarrow 0$.
Since $V$ is nondecreasing, so is the function $t\mapsto \E[V(E_t)]$, and the desired conclusion now follows from the monotone density theorem \cite[Theorem 1.7.2]{BGT}.\end{proof}

The above two lemmas allow us to establish the small-time asymptotic behavior of the function $\E[E_{t}\ln(1/E_{t})]$, which is needed to deal with the threshold case $\alpha=1$.

\begin{prop}\label{prop:asym log}
Let $E$ be the inverse of a subordinator $D$ with Laplace exponent $\phi\in\mathcal{R}_\beta(\infty)$ with $\beta\in(0,1)$. Then
\[
\E[E_{t}\ln(1/E_{t})]\sim  \frac{1}{\Gamma(1+\beta)}[\phi(1/t)]^{-1}\ln\phi(1/t) \text{ as } t\downarrow 0.
\]
\end{prop}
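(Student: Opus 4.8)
The plan is to transfer the asymptotics already secured for the monotonized function $V$ in Lemma \ref{lemma:F} over to the genuine quantity $\E[E_t\ln(1/E_t)]$, by showing that the discrepancy between the two decays much faster than the target and is therefore negligible. Since $V(x)$ coincides with $x\ln(1/x)$ on $(0,e^{-1}]$ and equals $e^{-1}$ on $(e^{-1},\infty)$ by \eqref{eqn:F}, I would first decompose
\[
\E[E_t\ln(1/E_t)] = \E[V(E_t)] + \E[E_t\ln(1/E_t) - e^{-1}; E_t > e^{-1}],
\]
so that the entire error is supported on the tail event $\{E_t > e^{-1}\}$, which is exactly the regime governed by the estimates \eqref{eqn:small1}--\eqref{eqn:small2}.

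Next I would bound the two pieces of the error separately. The contribution $e^{-1}\P(E_t>e^{-1})$ decays like $\exp(-c_1 t^{-\beta/(1-\beta)}\ell_1(t))$ by \eqref{eqn:small1}. For the remaining part $\E[E_t\ln(1/E_t); E_t > e^{-1}]$, the integrand $E_t\ln(1/E_t)=-E_t\ln E_t$ is unbounded and changes sign on $\{E_t>e^{-1}\}$, so a crude bound does not suffice; instead I would absorb the logarithm into a small power via the elementary inequality $x|\ln x|\le C_\eta x^{1+\eta}$, valid for all $x\ge e^{-1}$ with any fixed $\eta\in(0,1)$ and a constant $C_\eta$. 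This yields
\[
\bigl|\E[E_t\ln(1/E_t); E_t>e^{-1}]\bigr| \le C_\eta\,\E[E_t^{1+\eta}; E_t>e^{-1}],
\]
and the right-hand side decays like $\exp(-c_2 t^{-\beta/(1-\beta)}\ell_2(t))$ by \eqref{eqn:small2} applied with $p=1+\eta$ and $\delta=e^{-1}$.

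Finally I would compare these stretched-exponential rates with the polynomial order of the target. Writing $\phi(s)=s^\beta\ell(s)$ with $\ell\in\mathcal{R}_0(\infty)$, the claimed right-hand side behaves like $t^\beta\ell(1/t)^{-1}\ln\phi(1/t)$, and since $\ln\ell(1/t)=o(\ln(1/t))$ for slowly varying $\ell$, one has $\ln\phi(1/t)\sim\beta\ln(1/t)$; thus the target is regularly varying of positive index $\beta$ and tends to zero only polynomially fast (up to slowly varying and logarithmic corrections). Because $\beta/(1-\beta)>0$, the exponents $t^{-\beta/(1-\beta)}\ell_i(t)\to\infty$ as $t\downarrow 0$ faster than any multiple of $\ln(1/t)$, so both error pieces are $o\bigl([\phi(1/t)]^{-1}\ln\phi(1/t)\bigr)$. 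Combining this with Lemma \ref{lemma:F} then gives the proposition. The main obstacle is the second step: the tail term involves an unbounded, sign-changing integrand on an unbounded event, and the resolution is precisely to trade the logarithm for an arbitrarily small power through $x|\ln x|\le C_\eta x^{1+\eta}$ and then invoke the moment tail estimate \eqref{eqn:small2}, which is the tool provided for exactly this purpose.
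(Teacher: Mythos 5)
Your proposal is correct and follows essentially the same route as the paper: the same decomposition of the error onto the tail event $\{E_t>e^{-1}\}$, the bound on $\P(E_t>e^{-1})$ via \eqref{eqn:small1}, the absorption of the logarithm into a power so that \eqref{eqn:small2} applies (the paper uses $|\ln x|\le c_3 x$ and $p=2$ where you use $x^{\eta}$ and $p=1+\eta$, an immaterial difference), and the comparison of the resulting stretched-exponential decay with the polynomial decay of $[\phi(1/t)]^{-1}\ln\phi(1/t)$. No gaps.
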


\begin{proof}
The definition of $V(x)$  in \eqref{eqn:F} yields
\[
	\E[E_{t}\ln(1/E_{t})]
	=\E[V(E_t)]-e^{-1}\P(E_{t}\geq e^{-1})+\E[E_{t}\ln(1/E_t); E_{t}\geq e^{-1}].
\]
This together with Lemma \ref{lemma:F} implies that the desired result follows upon showing that both $\P(E_{t}\geq e^{-1})$ and $\E[E_{t}\ln(1/E_t); E_{t}\geq e^{-1}]$ decay at a rate faster than $[\phi(1/t)]^{-1}\ln\phi(1/t)$ decays. However, since $\phi(1/t)^{-1}\ln\phi(1/t)$ has a polynomial decay due to the condition $\phi(s)\in\mathcal{R}_\beta(\infty)$, it suffices to prove that both $\P(E_{t}\geq e^{-1})$ and $\E[E_{t}\ln(1/E_t); E_{t}\geq e^{-1}]$ decay at least exponentially as $t\downarrow 0$. 
Now, \eqref{eqn:small1} immediately yields an exponential decay of $\P(E_{t}\geq e^{-1})$. 
On the other hand, in terms of $\E[E_{t}\ln(1/E_t); E_{t}\geq e^{-1}]$, choose a constant $c_{3}>0$ such that $|\ln x|\leq c_{3}x$ for all $x>e^{-1}$, so that
$
\left|\E[E_{t}\ln(E_{t}); E_{t}>e^{-1}]\right|\le c_3\E[E_t^{2}; E_{t}>e^{-1}]. 
$
Since the right-hand side decays exponentially due to \eqref{eqn:small2}, the left-hand side decays at least exponentially.
This completes the proof. 
\end{proof}

\begin{proof}[Proof of Theorem \ref{thm:SHC small time}]
By \cite[Proposition 4.2]{KP}, for any fixed $p>0$ and $\delta>0$,
\[
	\E[E_t^p]\sim \E[E_t^p; E_t\le \delta]\sim \frac{\Gamma(p+1)}{\Gamma(p\beta+1)}[\phi(1/t)]^{-p} \ \textrm{as} \ t\downarrow 0. 
\]
This yields the following statement in the case when $\alpha\in (0,1)\cup(1,2)$: 
\begin{align}\label{eqn:alpha_not_1}
\E[f_{\alpha}(E_{t}); E_{t}\leq \delta]\sim \E[f_{\alpha}(E_{t})]\sim
\begin{cases}
\frac{\Gamma(1+1/\alpha)}{\Gamma(1+\beta/\alpha)}[\phi(1/t)]^{-1/\alpha} &\text{if } \alpha\in (1,2),\\
\frac{1}{\Gamma(1+\beta)}[\phi(1/t)]^{-1} &\text{if } \alpha\in (0,1),
\end{cases}
\end{align}
where $f_\alpha(t)$ is defined in \eqref{def:f_alpha}.
On the other hand, 
application of \eqref{eqn:small2} and Proposition \ref{prop:asym log} yields 
\begin{align}\label{eqn:alpha_equal_1}
	\E[f_{\alpha}(E_{t}); E_{t}\leq \delta]\sim \E[f_{\alpha}(E_{t})]
	\sim \frac{1}{\Gamma(1+\beta)}[\phi(1/t)]^{-1}\ln\phi(1/t) \ \ \text{if } \alpha=1.
\end{align}
In particular, expressions \eqref{eqn:alpha_not_1} and \eqref{eqn:alpha_equal_1} show that regardless of the value of $\alpha\in(0,2)$,  
\beq\label{cond:main2}
	\E[f_{\alpha}(E_t); E_{t}\leq \delta_1] \sim \E[f_{\alpha}(E_t); E_{t}\leq \delta_2] \ \textrm{for any} \ \delta_1,\delta_2>0. 
\eeq 
Moreover, expressions \eqref{eqn:small1}, \eqref{eqn:alpha_not_1} and \eqref{eqn:alpha_equal_1} together imply that
\beq\label{cond:main1}
\P(E_{t}>\delta)=o(\E[f_{\alpha}(E_t); E_{t}\leq \delta]) \ \textrm{for any} \ \delta>0. 
\eeq
Applying the argument given in the proof of \cite[Proposition 2.2]{KP}, one can verify that Theorem \ref{thm:main} and the two statements \eqref{cond:main2} and \eqref{cond:main1} together imply
\[
|\Omega|-Q^{Y\circ E}_{\Omega}(t)\sim c_{\alpha}\E[f_{\alpha}(E_t); E_{t}\leq \delta]
\]
for any $\delta>0$, where $c_\alpha$ is defined in \eqref{def:f_alpha}. 
This is equivalent to the conclusion of Theorem \ref{thm:SHC small time}.
\end{proof}

\begin{remark}
\begin{em}
Suppose for simplicity that the time change $E$ in Theorem \ref{thm:SHC small time} is given by the inverse of a \textit{stable} subordinator with index $\beta\in(0,1)$. 
Then, the rate function for the decay of $|\Omega|-Q^{Y\circ E}_{\Omega}(t)$ as $t\downarrow 0$ is given by 
\[
	\begin{cases}
		t^{\beta/\alpha} \ &\textrm{if} \ \alpha\in(1,2),\\
		t^{\beta} \ln (1/t) \ &\textrm{if} \ \alpha=1,\\
		t^{\beta} \ &\textrm{if} \ \alpha\in(0,1).
	\end{cases}
\]
Comparing this with the statement of Theorem \ref{thm:main}, one can observe that the short-time decay rate for $|\Omega|-Q^{Y\circ E}_{\Omega}(t)$ is faster than that for $|\Omega|-Q^{Y}_{\Omega}(t)$, regardless of the values of the indices $\alpha\in(0,2)$ and $\beta\in(0,1)$. This makes sense since, even though the introduction of the inverse stable subordinator $E$ makes the heat particles diffuse at a slower rate in large time, they actually diffuse at a faster rate near $t=0$, and thus, more heat particles exit the domain $\Omega$ in short time.
\end{em}
\end{remark}

\begin{singlespace}

\end{singlespace}
\end{doublespace}

\vskip 0.3truein

{\bf Kei Kobayashi}

Department of Mathematics, Fordham University, NY 10023, USA

E-mail: \texttt{kkobayashi5@fordham.edu}

\vskip 0.3truein

{\bf Hyunchul Park}

Department of Mathematics, State University of New York at New Paltz, NY 12561,
USA

E-mail: \texttt{parkh@newpaltz.edu}

\end{document}